\numberwithin{equation}{section}
\newtheorem{theorem}{Theorem}[section]
\newtheorem{proposition}[theorem]{Proposition}
\newtheorem{corollary}[theorem]{Corollary}
\theoremstyle{definition}
\newtheorem{example}[theorem]{Example}
\newtheorem{definition}[theorem]{Definition}
\newtheorem{remark}[theorem]{Remark}
\title{Null surfaces of pseudo-spherical spacelike framed curves in the anti-de Sitter 3-space}
\author{\\\
        {\bf O. O\u{g}ulcan Tuncer}\thanks{E-mail address:
        otuncer@hacettepe.edu.tr}
 \medskip\\
   \\Department of Mathematics, Hacettepe University,
                    \\ 06800 Beytepe, Ankara, Turkey
}
\begin{document}

\maketitle
\begin{abstract}
We introduce null surfaces (or nullcone fronts) of pseudo-spherical spacelike framed curves in the three-dimensional anti-de Sitter space. These surfaces are formed by the light rays emitted from points on anti-de Sitter spacelike framed curves. We then classify singularities of the nullcone front of a pseudo-spherical spacelike framed curve and show how these singularities are related to the singularities of the associated framed curve. We also define a family of functions called the Anti-de Sitter distance-squared functions to explain the nullcone front of a pseudo-spherical spacelike framed curve as a wavefront from the viewpoint of the Legendrian singularity theory. We finally provide some examples to illustrate the results of this paper. \medskip\\
\textit{Keywords:} nullcone front, anti-de Sitter space, framed curve, distance-squared function\medskip\\
\textit{Mathematics Subject Classification:} 53A35; 57R45; 58K05
\end{abstract}
\section{Introduction}
Being among the simplest of curved spacetimes, $n$-dimensional Anti-de Sitter spacetime (AdS) has been of continuing interest not only to relativists but also to geometers. The local differential geometry of regular curves in the anti-de Sitter 3-space is still an active research area. But we cannot make use of standard tools to investigate curves with singularities. Many authors have studied certain families of these singular curves by introducing well-defined moving frames in different spaces, and these studies led to not only investigating several geometric properties of singular curves but also introducing important curves such as evolutes, involutes, pedals, and orthotomics associated to these singular curves 
\cite{chen,fukunaga,fukunaga2, fukunaga3, fukunaga4, honda, honda2,li,li2,lisun,li-tuncer,song, tuncer,tuncer2,yu}.  Recently we have introduced pseudo-spherical non-null framed curves in the anti-de Sitter 3-space and established the existence and uniqueness of these curves \cite{tuncerAdS}. After defining moving frames along these framed curves that are well-defined even at singular points, we have also investigated the geometric and singularity properties of evolutes and focal surfaces of the pseudo-spherical framed immersions.

Null submanifolds are one of the most appealing subjects from the viewpoint of singularity theory and general relativity. The null surfaces associated to a curve have been widely studied \cite{null1,null2,null3,null4,null5,null7,null8,null9}. These papers are mainly interested in investigating geometric properties and classifying singularities of these null surfaces. However, to the author's knowledge, the only paper dealing with the null surfaces associated to singular framed curves is \cite{null6}. 

The purpose of this paper is to investigate the singularities of null surfaces (or nullcone fronts) of pseudo-spherical spacelike framed curves in the anti-de Sitter 3-space. Therefore we generalize to null surfaces of singular spacelike framed curves the singularity result in \cite{null5} for null surfaces associated to regular spacelike curves in the anti-de Sitter 3-space.

This paper is organized as follows. In Section 2, a brief review of the local differential geometry of regular spacelike curves in the anti-de Sitter 3-space is given. In Section 3, pseudo-spherical spacelike framed curves in the anti-de Sitter 3-space are reviewed. In Section 4, the nullcone fronts of pseudo-spherical spacelike framed curves in the anti-de Sitter 3-space are introduced and the singularities of these surfaces are classified. In Section 5, using distance-squared functions on pseudo-spherical spacelike framed curves, the nullcone fronts are explained as wavefronts from the viewpoint of the Legendrian singularity theory. We finally provide some examples to support our results in Section 6.
\section{Preliminaries}
The four-dimensional real vector space with a pseudo-scalar product which is defined for $u=(u_1,u_2,u_3,u_4),\, w=(w_1,w_2,w_3,w_4)\in\mathbb{R}^4$ by 
\begin{equation*}
	\langle u,w\rangle=-u_1w_1-u_2w_2+u_3w_3+u_4w_4, 
\end{equation*}
is called the semi-Euclidean 4-space with index $2$ denoted by $\mathbb{R}^4_2$. \\ 
\indent A non-zero vector $u=(u_1,u_2,u_3,u_4)\in\mathbb{R}^4_2$ spacelike, timelike, or lightlike (null) if $\langle u,u\rangle>0$, $\langle u,u\rangle<0$, or $\langle u,u\rangle=0$, respectively. The pseudo-norm of the vector $u$ is defined by $\|u\|=\sqrt{\lvert\langle u,u\rangle\rvert}$. For three arbitrary vectors $u=(u_1,u_2,u_3,u_4)$, $v=(v_1,v_2,v_3,v_4)$, and $w=(w_1,w_2,w_3,w_4)$, the triple vector product is defined by
\begin{equation*}
	u\times v\times w = 
	\begin{vmatrix}
		-e_1& -e_2 & e_3 & e_4\\
		u_1& u_2 & u_3 &u_4\\
        v_1 & v_2 & v_3 & v_4\\
		w_1& w_2 & w_3 & w_4
	\end{vmatrix} 
\end{equation*}
where the set $\{e_1,e_2,e_3,e_4\}$ is the canonical basis of $\mathbb{R}^4_2$.\\ \indent
A curve in $\mathbb{R}^4_2$ is said to be spacelike, timelike, or lightlike (null) if the tangent vector of the curve is spacelike, timelike, or lightlike (null), respectively. 

There are three types of pseudo-spheres in $\mathbb{R}^4_2$; the anti-de Sitter 3-space, pseudo $3$-sphere with index 2, and nullcone at the origin respectively defined by
\begin{align*}
	&AdS^3=\{{u}\in\mathbb{R}^4_2\,\vert\,\langle {u},{u}\rangle=-1 \},\\
	&S^3_2=\{{u}\in\mathbb{R}^4_2\,\vert\,\langle {u},{u}\rangle=1 \},\\
	&\Lambda^3=\{{u}\in\mathbb{R}^4_2\backslash\{{0}\} \,\vert\,\langle {u},{u}\rangle=0 \}.
\end{align*} 
We now discuss the local differential geometry of regular spacelike curves in the anti-de Sitter 3-space. Let $\gamma:I\to AdS^3$ be a unit-speed spacelike curve. Let $T(s)=\gamma'(s)$ be the unit tangent vector. Since $\langle \gamma(s),\gamma(s) \rangle=-1$, we have $\langle \gamma(s), T(s)\rangle=0$. Then we find that $\langle \gamma(s), T'(s)\rangle =-1$. We take $N_1(s)=T'(s)-\gamma(s)$ and $N_2(s)=\gamma(s)\times T(s)\times N_1(s)$. It is easy to check that $N_1$ and $N_2$ are normal vectors of the spacelike curve $\gamma$ in $AdS^3$. These normal vectors can be spacelike or timelike. We also define the curvature by $\kappa_g(s)=\|T'(s)-\gamma(s)\|$. So we say that the spacelike curve $\gamma$ is a geodesic in $AdS^3$ if $\kappa_g(s)=0$ and $N_1(s)=0$. In the case of $\kappa_g(s)\neq0$, we are able to define the following unit vectors.
\[ n_1(s)=\dfrac{T'(s)-\gamma(s)}{\|T'(s)-\gamma(s)\|}=\dfrac{N_1(s)}{\|N_1(s)\|},\qquad n_2(s)=\gamma(s)\times T(s)\times n_1(s). \]
Hence the set $\{ \gamma(s), T(s), n_1(s), n_2(s)\}$ forms a pseudo-orthonormal frame along the spacelike curve $\gamma$. Then the Frenet-Serret type formulas are governed by
\begin{equation*}
	\begin{pmatrix}
		\gamma'(s)\\
		T'(s)\\
		n_1'(s)\\
        n_2'(s)
	\end{pmatrix}= \begin{pmatrix}
		0 & 1 & 0 & 0 
        \\ 1 & 0 & \kappa_g(s) & 0
        \\ 0 & -\delta \kappa_g(s) & 0 & \tau_g(s) 
        \\ 0 & 0 &  \tau_g(s) & 0
        
	\end{pmatrix}\begin{pmatrix}
		\gamma(s)\\
		T(s)\\
		n_1(s)\\
        n_2(s)
	\end{pmatrix},
\end{equation*}
where $\delta=\langle n_1(s), n_1(s)\rangle$ and $\tau_g(s)=\frac{\delta}{\kappa_g^2(s)}\det(\gamma(s),\gamma'(s),\gamma''(s),\gamma'''(s))$.
\section{Pseudo-spherical spacelike framed curves in the anti-de Sitter 3-space}
Let $\gamma:I\to AdS^3$ be a smooth curve. Then $(\gamma, v_1,v_2):I\to AdS^3\times \Delta_1$ is called a \textit{pseudo-spherical spacelike framed curve} if $(\gamma(s),v_1(s))^*\theta=0$ and $(\gamma(s),v_2(s))^*\theta=0$ for all $s\in I$, where
\begin{equation*}
	\Delta_1=\{(\mathbf{u},\mathbf{w})\,\vert\,\langle\mathbf{u},\mathbf{w}\rangle=0\}\subset AdS^3 \times {S}^3_2\,\,(\text{or}\,\,{S}^3_2\times AdS^3)
\end{equation*}
is a $4$-dimensional contact manifold, and $\theta$ is a canonical contact $1$-form on $\Delta_1$ \cite{CI}. The condition $(\gamma(s), v_i(s))^*\theta=0$ ($i=1,2$) is equivalent to $\langle \gamma'(s),v_i(s)\rangle=0$ ($i=1,2$) for all $s\in I$. If $(\gamma, v_1,v_2)$ is an immersion, then it is called a \textit{pseudo-spherical spacelike framed immersion}.

The curve $\gamma:I\to AdS^3$ is called a \textit{pseudo-spherical spacelike framed base curve} if there exists a smooth map $(v_1,v_2):I\to\Delta_1$ for which $(\gamma, v_1,v_2)$ is a pseudo-spherical spacelike framed curve.

Let $\mu(s)=\gamma(s)\times v_1(s)\times v_2(s)$. The set $\{\gamma(s), v_1(s),v_2(s),\mu(s) \}$ is a pseudo-orthonormal frame along $\gamma$. This frame is well-defined even at singular points of $\gamma$. The Frenet-Serret type formulas for this frame are given by
\begin{equation}\label{SpacelikeSF}
	\begin{pmatrix}
		\gamma'(s)\\
		v_1'(s)\\
		v_2'(s)\\
        \mu'(s)
	\end{pmatrix}= \begin{pmatrix}
		0 & 0 & 0 & \alpha(s) 
        \\ 0 & 0 & \ell(s) & m(s)
        \\ 0 & \ell(s) & 0 & n(s)
        \\ \alpha(s) & -\epsilon m(s) &  \epsilon n(s) & 0
        
	\end{pmatrix}\begin{pmatrix}
	\gamma(s)\\
		v_1(s)\\
		v_2(s)\\
        \mu(s)
	\end{pmatrix},
\end{equation}
where $\epsilon=\langle v_1(s),v_1(s)\rangle$, $\alpha(s)=\langle \gamma'(s), \mu(s)\rangle$, $\ell(s)=-\epsilon\langle v_1'(s), v_2(s)\rangle$, $m(s)=\langle v_1'(s), \mu(s)\rangle$, and $n(s)=\langle v_2'(s), \mu(s)\rangle$. We call the mapping $(\alpha, \ell, m,n):I\to\mathbb{R}^4$ the \textit{curvature} of the pseudo-spherical spacelike framed curve $(\gamma,v_1,v_2)$. Notice that $s_0$ is a singular point of $\gamma$ if and only if $\alpha(s_0)=0$. See \cite{tuncerAdS} for further details.

\begin{theorem}[Existence of pseudo-spherical spacelike framed curves \cite{tuncerAdS}]
For a smooth mapping $(\alpha,\ell,m,n):I\to\mathbb{R}^4$, there exists a pseudo-spherical spacelike framed curve $(\gamma,v_1,v_2)$ such that $\alpha$, $\ell$, $m$, and $n$ are the curvatures of $\gamma$.
\end{theorem}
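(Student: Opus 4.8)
The plan is to follow the standard strategy for fundamental-theorem-of-curves type existence results: promote the Frenet--Serret relations \eqref{SpacelikeSF} to a linear system of ordinary differential equations, solve it with a suitably chosen initial pseudo-orthonormal frame, and then verify that the solution remains a pseudo-orthonormal frame of the correct signature throughout $I$.

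First I would fix $s_0\in I$ and a sign $\epsilon\in\{-1,+1\}$, and assemble the coefficient matrix
\begin{equation*}
A(s)=\begin{pmatrix}
0 & 0 & 0 & \alpha(s)\\
0 & 0 & \ell(s) & m(s)\\
0 & \ell(s) & 0 & n(s)\\
\alpha(s) & -\epsilon m(s) & \epsilon n(s) & 0
\end{pmatrix}
\end{equation*}
from the prescribed curvatures. Writing $F(s)$ for the $4\times 4$ matrix whose rows are the sought vectors $\gamma(s),v_1(s),v_2(s),\mu(s)$, the system \eqref{SpacelikeSF} becomes the linear matrix equation $F'(s)=A(s)F(s)$. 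Choosing an initial datum $F(s_0)=F_0$ whose rows form a pseudo-orthonormal basis of $\mathbb{R}^4_2$ with $\langle\gamma,\gamma\rangle=-1$, $\langle v_1,v_1\rangle=\epsilon$, $\langle v_2,v_2\rangle=-\epsilon$, and $\langle\mu,\mu\rangle=1$, the existence and uniqueness theorem for linear ODEs with smooth coefficients yields a unique smooth solution $F$ defined on all of $I$.

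The heart of the argument is to show that $F(s)$ stays pseudo-orthonormal. Let $G_0=\mathrm{diag}(-1,-1,1,1)$ be the matrix of the ambient pseudo-scalar product and let $\mathcal{G}(s)=F(s)G_0 F(s)^{\top}$ be the Gram matrix of the frame, so that $\mathcal{G}(s_0)=\mathcal{G}_0:=\mathrm{diag}(-1,\epsilon,-\epsilon,1)$. Differentiating and using $F'=AF$ gives the linear matrix ODE $\mathcal{G}'=A\mathcal{G}+\mathcal{G}A^{\top}$. A direct computation shows that the specific signs appearing in $A$ are exactly those making $A(s)\mathcal{G}_0$ skew-symmetric for every $s$, i.e. $A(s)\mathcal{G}_0+\mathcal{G}_0A(s)^{\top}=0$. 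Hence the constant matrix $\mathcal{G}_0$ solves the same ODE with the same initial value, and by uniqueness $\mathcal{G}(s)\equiv\mathcal{G}_0$. This is precisely the statement that $\{\gamma(s),v_1(s),v_2(s),\mu(s)\}$ is a pseudo-orthonormal frame of the prescribed signature for all $s$; in particular $\gamma(s)\in AdS^3$, $\langle v_1(s),v_2(s)\rangle=0$, and (choosing $F_0$ positively oriented) $\mu(s)=\gamma(s)\times v_1(s)\times v_2(s)$ as in the frame preceding \eqref{SpacelikeSF}.

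It then remains to check that $(\gamma,v_1,v_2)$ is genuinely a pseudo-spherical spacelike framed curve with the desired curvatures. Since the first row of the system gives $\gamma'=\alpha\mu$, the orthogonality $\langle\mu,v_i\rangle=0$ yields the contact conditions $\langle\gamma',v_1\rangle=\langle\gamma',v_2\rangle=0$, so $(\gamma,v_1,v_2)$ maps into $AdS^3\times\Delta_1$ and meets the framed-curve requirement. Finally, reading off $\langle\gamma',\mu\rangle$, $-\epsilon\langle v_1',v_2\rangle$, $\langle v_1',\mu\rangle$, and $\langle v_2',\mu\rangle$ from $F'=AF$ and using $\mathcal{G}\equiv\mathcal{G}_0$ recovers $\alpha,\ell,m,n$ exactly. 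The only real obstacle is the signature bookkeeping in the skew-symmetry computation: this is where one must confirm that the factors of $\epsilon$ and the sign pattern in \eqref{SpacelikeSF} force $\mu$ to be spacelike and $v_2$ to carry signature $-\epsilon$, which is what guarantees that the pseudo-metric is preserved by the flow.
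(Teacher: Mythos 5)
Your proposal is correct: the sign pattern in \eqref{SpacelikeSF} does make $A(s)\mathcal{G}_0$ skew-symmetric for $\mathcal{G}_0=\mathrm{diag}(-1,\epsilon,-\epsilon,1)$, so the Gram matrix is preserved and the frame retains its causal characters, which is exactly what the existence statement needs. Note that this paper does not actually contain a proof---the theorem is quoted from the author's earlier work \cite{tuncerAdS}---but your linear-ODE/Gram-matrix argument is precisely the standard proof used for such fundamental theorems of framed curves, so your approach matches the intended one.
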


\begin{theorem}[Uniqueness of pseudo-spherical spacelike framed curves \cite{tuncerAdS}]
Let $(\gamma, v_1, v_2)$ and $(\Tilde{\gamma}, \Tilde{v}_1, \Tilde{v}_2)$ be two pseudo-spherical spacelike framed curves in $AdS^3$. Suppose that the curvatures $(\alpha,\ell,m,n)$ and $(\Tilde{\alpha}, \Tilde{\ell},\Tilde{m}, \Tilde{n})$ of these two framed curves coincide. Then $(\gamma, v_1, v_2)$ and $(\Tilde{\gamma}, \Tilde{v}_1, \Tilde{v}_2)$ are congruent as pseudo-spherical spacelike framed curves.
\end{theorem}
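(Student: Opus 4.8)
The plan is to reduce the statement to the uniqueness theorem for linear systems of ordinary differential equations. The Frenet--Serret type formulas \eqref{SpacelikeSF} express the derivative of the moving frame $\{\gamma, v_1, v_2, \mu\}$ as a linear system of the form $F'(s)=\Omega(s)\,F(s)$, where $F(s)$ collects the frame vectors $\gamma(s), v_1(s), v_2(s), \mu(s)$ and $\Omega(s)$ is the coefficient matrix assembled from $\epsilon$ and the curvatures $(\alpha,\ell,m,n)$. Since the two framed curves share the same curvatures (and the same sign $\epsilon=\langle v_1,v_1\rangle=\langle \Tilde v_1,\Tilde v_1\rangle$, which is part of the standing type of the curve), their frames satisfy one and the same linear system. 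The idea is to move the frame of the first curve onto that of the second by a fixed ambient isometry matched at a single point $s_0$, and then to let ODE uniqueness propagate the coincidence over the whole interval $I$.

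First I would fix $s_0\in I$. Both $\{\gamma(s_0), v_1(s_0), v_2(s_0), \mu(s_0)\}$ and $\{\Tilde\gamma(s_0), \Tilde v_1(s_0), \Tilde v_2(s_0), \Tilde\mu(s_0)\}$ are pseudo-orthonormal bases of $\mathbb{R}^4_2$ with identical signature pattern, so there is a unique linear map $A$ sending the first basis to the second; because it carries a pseudo-orthonormal basis to a pseudo-orthonormal basis of the same type, it preserves $\langle\,,\,\rangle$, i.e. $A\in O(2,2)$. In particular $A$ preserves the quadric $\langle u,u\rangle=-1$, so $A(AdS^3)=AdS^3$ and $A$ is a genuine isometry of the anti-de Sitter 3-space. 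I would then invoke the transformation law of the triple product under an isometry, $Au\times Av\times Aw=(\det A)\,A(u\times v\times w)$; evaluating it at $s_0$ with $u,v,w=\gamma,v_1,v_2$ and using $A\mu(s_0)=\Tilde\mu(s_0)=\Tilde\gamma(s_0)\times\Tilde v_1(s_0)\times\Tilde v_2(s_0)$ forces $\det A=1$.

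Next I would set $\hat\gamma=A\gamma$, $\hat v_1=Av_1$, $\hat v_2=Av_2$, $\hat\mu=A\mu$. Because $A$ is a constant isometry, inner products of the transformed vectors equal those of the originals, so $(\hat\gamma,\hat v_1,\hat v_2)$ is again a pseudo-spherical spacelike framed curve whose curvatures coincide with $(\alpha,\ell,m,n)$; moreover $\det A=1$ guarantees $\hat\mu=\hat\gamma\times\hat v_1\times\hat v_2$. Differentiating and pulling the constant $A$ through each equation, the quadruple $(\hat\gamma,\hat v_1,\hat v_2,\hat\mu)$ satisfies the Frenet--Serret system \eqref{SpacelikeSF} with the identical coefficient matrix $\Omega$, and by construction it agrees with $(\Tilde\gamma,\Tilde v_1,\Tilde v_2,\Tilde\mu)$ at $s_0$. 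The two quadruples therefore solve the same linear ODE with the same initial value, and uniqueness of solutions yields their coincidence on all of $I$; in particular $\Tilde\gamma=A\gamma$, $\Tilde v_1=Av_1$, $\Tilde v_2=Av_2$, which is exactly congruence.

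The routine parts — that $A$ is an isometry and that the curvature coefficients are preserved — are immediate from $A$ being a constant element of $O(2,2)$. The point that needs the most care is the orientation bookkeeping: one must check that the isometry matching the full four-frames automatically lies in $SO(2,2)$ (equivalently $\det A=1$), since otherwise the auxiliary vector $\mu$ would transform with an extra sign and $\hat\mu$ would fail to be the triple product of $\hat\gamma,\hat v_1,\hat v_2$, breaking the identification of the transformed quadruple with an honest moving frame. This is precisely where the shared defining relation $\mu=\gamma\times v_1\times v_2$ is used.
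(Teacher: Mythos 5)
The paper itself contains no proof of this theorem: it is imported verbatim from the reference \cite{tuncerAdS}, so there is no in-paper argument to compare against. Your proposal is correct and is the standard (and surely the intended) proof: match the two pseudo-orthonormal frames at one parameter value by a constant pseudo-isometry $A\in O(2,2)$, use the transformation law of the triple product to force $\det A=1$ so that $A\mu$ remains the triple product of the transformed frame, and then conclude by uniqueness for the linear ODE system \eqref{SpacelikeSF}, whose coefficient matrix is the same for both curves. The one hypothesis you rightly make explicit --- that $\epsilon=\langle v_1,v_1\rangle=\langle \Tilde{v}_1,\Tilde{v}_1\rangle$, i.e.\ both curves take values in the same component $AdS^3\times S^3_2$ or $S^3_2\times AdS^3$ of $\Delta_1$ --- is genuinely needed (the coefficient matrix depends on $\epsilon$, and the frames must have identical Gram matrices for $A$ to exist), and it is implicitly assumed in the theorem's statement.
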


\section{Nullcone fronts of pseudo-spherical spacelike framed curves}
In this section we define nullcone fronts of pseudo-spherical spacelike framed curves by using the pseudo-orthonormal frame $\{\gamma(s), v_1(s),v_2(s),\mu(s) \}$. We then classify the singularities of these surfaces.
\begin{definition}
   Let $(\gamma,v_1,v_2):I\to AdS^3\times \Delta_1$ be a pseudo-spherical spacelike framed curve. The \textit{nullcone front} of $\gamma$ is defined by 
   \begin{equation}\label{nullconefront}
       \mathcal{NF}^\pm_\gamma(s,\lambda)=\gamma(s)+\lambda(v_1(s)\pm v_2(s)).
   \end{equation}
\end{definition}
Notice that $v_1(s)\pm v_2(s)$ in \eqref{nullconefront} is a null vector. So these nullcone fronts can be considered as the surfaces formed by the trajectories of light rays emitted from each point on a spacelike framed curve in the anti-de Sitter 3-space.

Before dividing into the main theorem of this paper, we need to recall the criteria for singularities of fronts (See \cite{Izumiya-Circular,kokubu} for  details). A smooth map $f:U\subset\mathbb{R}^2\to AdS^3$ is a frontal if there exists a unit vector field $\nu:U\to T_1(AdS^3)$ along $f$ such that $\langle df(X), \nu \rangle(p)=0$ for any $X\in T_pU$. $f$ is called a front if $(f,\nu)$ is an immersion. Given a local coordinate system $(u,v)$ of $U$, there exists a smooth function $\Omega(u,v)$ on $U$ called the signed area density such that
\[ f(u,v)\times \dfrac{\partial}{\partial u}f(u,v)\times \dfrac{\partial}{\partial v}f(u,v)=\Omega(u,v)\nu(u,v). \]
Then we have $\Omega^{-1}(0)=\mathcal{S}(f)$, where $\mathcal{S}(f)$ is the set of singular points of $f$. A singular point $p$ is called non-degenerate if $d\Omega(p)\neq0$. For a non-degenerate point $p$, there exists a regular curve $c(s):I\to U$ such that $c(s_0)=p$ and $\text{image}(c)=\mathcal{S}(f)$ near $p$. Then we have a non-zero vector field $\xi(s)$ near $p$ called the null vector field along $c$ such that $df(\xi(s))=0$ for all $s$. 
\begin{theorem}\label{thmsingcri}
    Let $f:U\to AdS^3$ be a front and let $p=c(s_0)\in U$ be a non-degenerate singular point of $f$. 
    \begin{enumerate}
        \item[{\normalfont (i)}] The germ of $f$ at $p$ is locally diffeomorphic to the cuspidal edge if and only if $\xi(s_0)$ is transversal to $c'(s_0)$, i.e., $\det(c'(s_0),\xi(s_0))\neq 0$.
        \item[{\normalfont (ii)}] The germ of $f$ at $p$ is locally diffeomorphic to the swallowtail if and only if $\det(c'(s_0),\xi(s_0))= 0$ and $\dfrac{d}{ds}\det(c'(s),\xi(s))(s_0)\neq 0$.
    \end{enumerate}
    Here $C\times\mathbb{R}=\left\{ (x_1,x_2)\,|\,x_1^2-x_2^3=0 \right\}\times \mathbb{R}$ is the cuspidal edge and \\ $SW=\left\{ (x_1,x_2,x_3)\,|\, x_1=3u^4+u^2v,\,\, x_2=4u^3+2uv,\,\,x_3=v\right\}$ is the swallowtail (see Figure \ref{figcesw}).
\end{theorem}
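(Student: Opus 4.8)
The plan is to recognize the statement as the standard criteria for cuspidal edges and swallowtails and to prove it by reducing each case to a normal-form computation. Since being locally diffeomorphic to $C\times\mathbb{R}$ or to $SW$ is an intrinsic $\mathcal{A}$-property of the germ that is unaffected by the ambient pseudo-metric, I would first pass to a coordinate chart of $AdS^3$ around $f(p)$ and treat $f$ as a front $f\colon U\to\mathbb{R}^3$ equipped with its Legendrian lift $(f,\nu)$. The central object is the signed area density $\Omega$, whose zero set is $\mathcal{S}(f)$. Non-degeneracy gives $d\Omega(p)\neq0$, so $\mathcal{S}(f)$ is the regular curve $c$, and since $\Omega\circ c\equiv0$ the differential $d\Omega$ annihilates $c'$. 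The first step I would record is the identity $\xi\Omega=d\Omega(\xi)=\phi(s)\,\det(c'(s),\xi(s))$ along $c$, where $\phi=d\Omega(w)\neq0$ for any fixed field $w$ with $\det(c',w)=1$; this follows from $d\Omega(c')=0$ and $\ker d\Omega=\mathbb{R}\,c'$. Consequently $\det(c',\xi)$ and the null derivative $\xi\Omega$ vanish at exactly the same points and, at such points, have derivatives proportional by the nonzero factor $\phi$. This translates both (i) and (ii) into conditions on $\Omega$ and $\xi\Omega$ and lets me use adapted coordinates.

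For part (i), transversality $\det(c',\xi)(s_0)\neq0$ is equivalent to $\xi\Omega(p)\neq0$. I would choose coordinates $(u,v)$ centered at $p$ so that $c=\{v=0\}$, hence $c'=\partial_u$, and use transversality to extend the null field to $\xi=\partial_v$. Then $f_v\equiv0$ on $v=0$, so $f(u,v)=f(u,0)+\tfrac{1}{2}v^2 f_{vv}(u,0)+O(v^3)$, where $f(u,0)$ is a regular space curve because $df(c')=f_u(u,0)\neq0$. The front condition is what forces $f_{vv}(u,0)$ to be linearly independent of $f_u(u,0)$: were they proportional, the lift $(f,\nu)$ would fail to be an immersion. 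Matching source and target coordinates then brings the $3$-jet to the model $(u,v^2,v^3)$, and finite determinacy of the cuspidal edge upgrades this jet coincidence to an $\mathcal{A}$-equivalence with $C\times\mathbb{R}$.

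For part (ii), the hypotheses read $\xi\Omega(p)=0$ with $\tfrac{d}{ds}(\xi\Omega)(s_0)\neq0$, i.e. $\xi(s_0)\parallel c'(s_0)$ while $\xi$ crosses $c$ to first order. In adapted coordinates the tangency makes $df(c')$ vanish at $s_0$, so the image $f\circ c$ of the singular curve has a cusp there; the nonvanishing derivative supplies exactly the generic one-parameter unfolding transverse to the tangency. I would then compute the $4$-jet of $f$ and, after source and target diffeomorphisms, match it with the swallowtail parametrization $SW$, the condition $\tfrac{d}{ds}\det(c',\xi)(s_0)\neq0$ being precisely the versality that yields $\mathcal{A}$-determinacy at this order. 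As a consistency check, for the model $SW$ itself one computes $f_u\times f_v=2(6u^2+v)(1,-u,u^2)$, so $\mathcal{S}(f)=\{v=-6u^2\}$, $\xi=\partial_u$, and $\det(c',\xi)=12u$, which indeed vanishes to first order at the origin.

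The hard part will be the determinacy step: showing that in each case the constrained low-order jet is finitely determined and therefore $\mathcal{A}$-equivalent to the stated model rather than to a more degenerate germ. This is exactly where the hypothesis that $f$ is a front---so that $(f,\nu)$ is an immersion---does the essential work, since it rigidifies the higher-order Taylor data. The converse (``only if'') directions are then the easy ones: one computes $\Omega$, $c$, and $\xi$ for the two normal forms and invokes the diffeomorphism-invariance of the transversality condition in (i) and of the tangency-plus-nonvanishing-derivative condition in (ii).
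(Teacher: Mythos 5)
First, a point of comparison: the paper does not prove Theorem \ref{thmsingcri} at all; it is recalled from the literature (the criteria of Kokubu--Rossman--Saji--Umehara--Yamada and Izumiya--Nagai--Saji, the paper's references \cite{kokubu,Izumiya-Circular}), so your attempt has to be measured against those standard proofs. Your opening reduction is correct and is indeed how those proofs begin: the identity $d\Omega(\xi)=\phi\,\det(c',\xi)$ with $\phi\neq0$ along $c$, the adapted coordinates with $c=\{v=0\}$ and $\xi=\partial_v$, the resulting expansion $f(u,v)=f(u,0)+\tfrac12 v^2f_{vv}(u,0)+O(v^3)$, and your consistency computation for the model $SW$ are all sound.

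The genuine gap is in the closing step of both (i) and (ii), which is exactly the step you defer. You finish by invoking ``finite determinacy'' of the cuspidal edge and of the swallowtail to promote a jet coincidence to $\mathcal{A}$-equivalence, but neither germ is finitely $\mathcal{A}$-determined: their singular sets are curves, the germ at every nearby singular point is again a cuspidal edge, and the only stable map germs $(\mathbb{R}^2,0)\to(\mathbb{R}^3,0)$ are immersions and cross-caps, so by the Mather--Gaffney criterion (finite determinacy requires isolated instability) finite determinacy fails for every order. Consequently ``the $3$-jet equals $(u,v^2,v^3)$ plus determinacy'' cannot close the argument: the determinacy that is true here holds only within the class of fronts (Legendre equivalence), and establishing it is essentially the content of the theorem. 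The actual proofs supply this either via generating families (Arnold--Zakalyukin: cuspidal edge and swallowtail are the wave fronts of versally unfolded $A_2$ and $A_3$ singularities, and the hypotheses on $\det(c',\xi)$ translate into versality of the family) or via the direct normal-form argument of \cite{kokubu}, which uses the unit normal $\nu$ throughout rather than only at jet level. A second, concrete error: the one place where you do invoke the front hypothesis is false as stated. It is not true that proportionality of $f_{vv}(u,0)$ and $f_u(u,0)$ makes $(f,\nu)$ fail to be an immersion: the map $f(u,v)=(u+v^2,v^3,v^4)$ is a front (its lift is an immersion, since $\nu_v(u,0)\neq0$) with $f_{vv}(u,0)=2f_u(u,0)$; what fails there is non-degeneracy, because $\Omega$ vanishes to second order along $v=0$. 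The independence you need follows instead from your own first step: transversality plus non-degeneracy give $\xi\Omega(p)\neq0$, and in your coordinates $\Omega_v(u,0)=\det\bigl(f_u,f_{vv},\nu\bigr)(u,0)$. The front condition enters elsewhere: it forces $\nu_v\neq0$ along $c$, which (together with the independence just obtained) pushes the cubic coefficient out of the plane spanned by $f_u$ and $f_{vv}$, via $\langle f_{vvv},\nu\rangle(u,0)=-2\langle f_{vv},\nu_v\rangle(u,0)\neq0$; this is the ingredient your sketch gestures at (``rigidifies the higher-order Taylor data'') but never makes precise, and without it even the reduction of the $3$-jet to $(u,v^2,v^3)$ is unjustified.
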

\begin{figure}[H]
    \centering
    \begin{minipage}{.5\textwidth}
        \centering
        \includegraphics[width=0.8\textwidth]{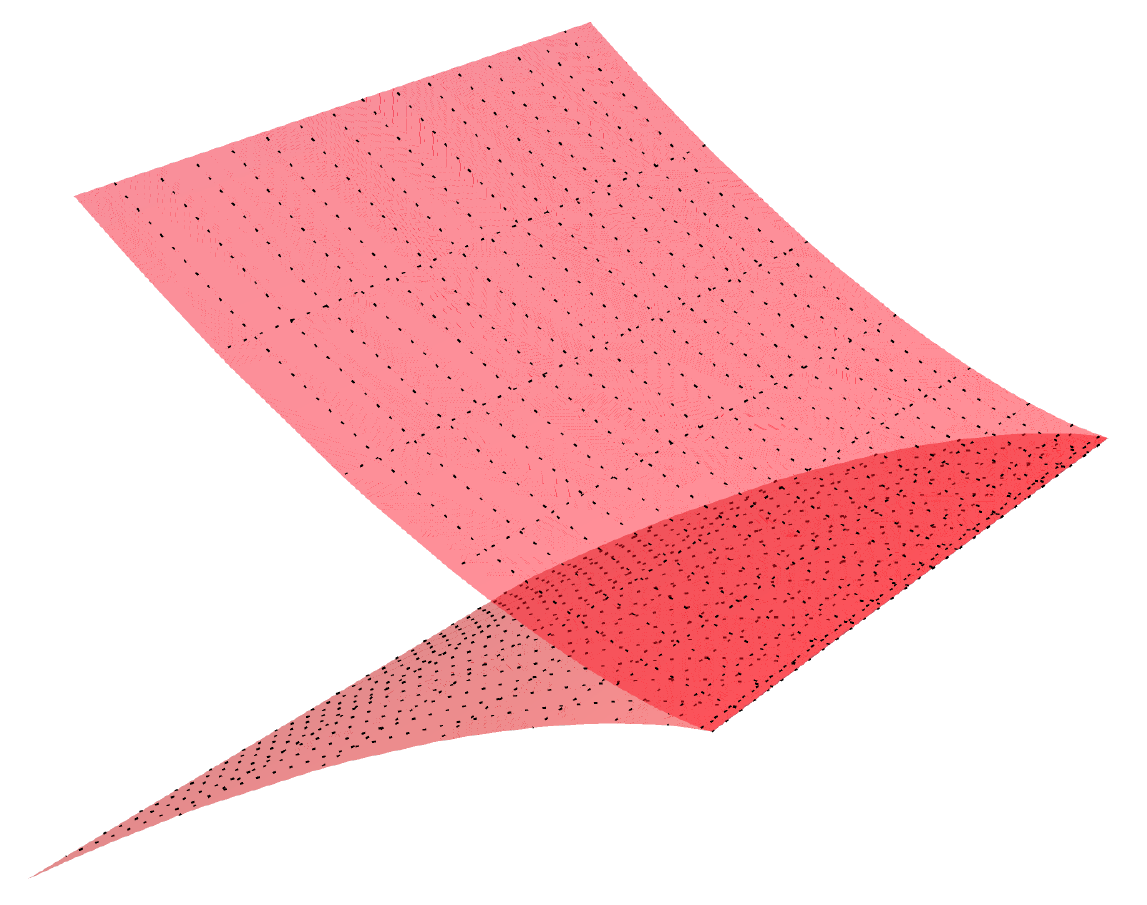}
    \end{minipage}%
    \begin{minipage}{0.5\textwidth}
        \centering
        \includegraphics[width=0.7\textwidth]{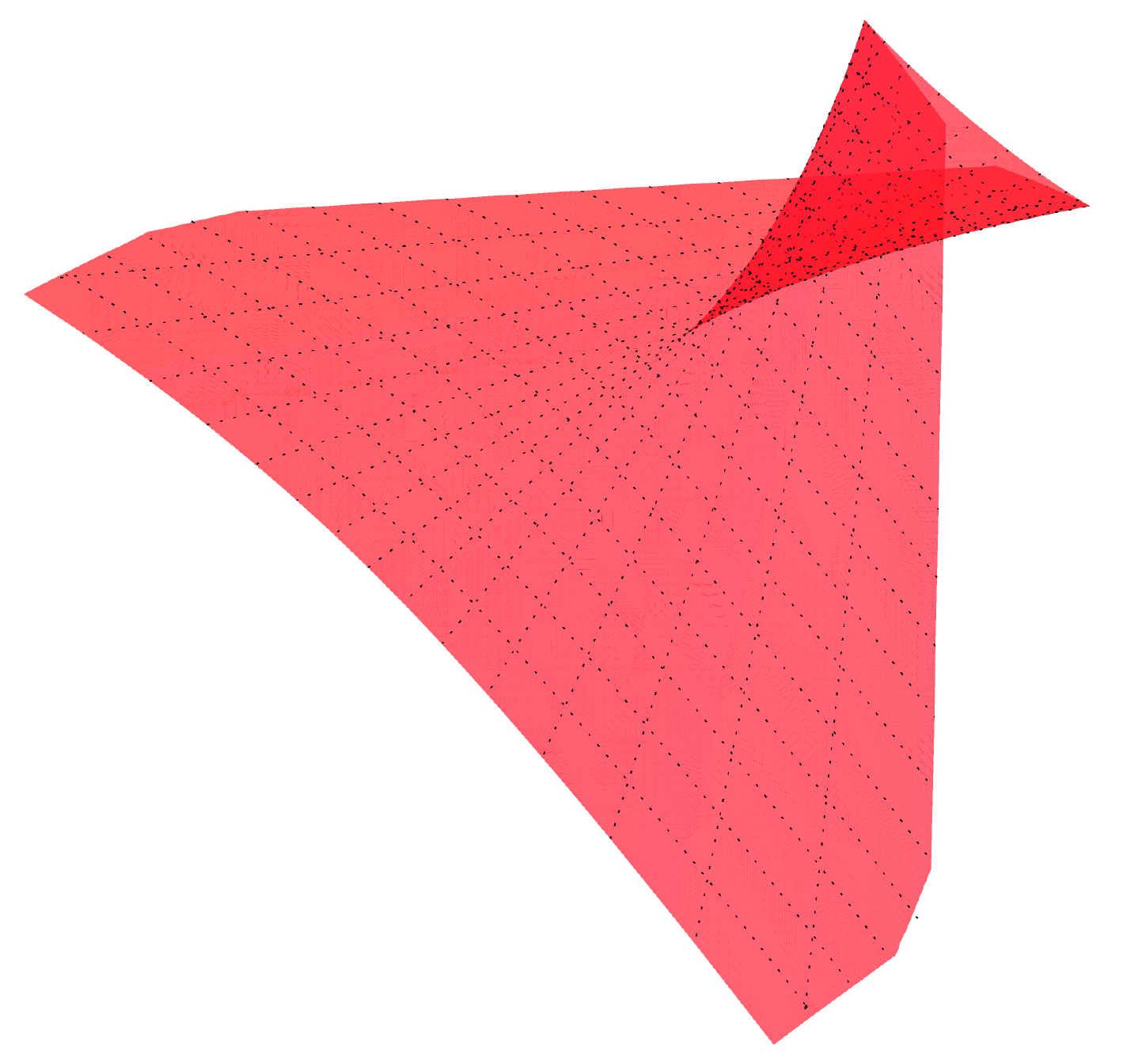}
    \end{minipage}
    \caption{Cuspidal edge (left) and Swallowtail (Right)    \label{figcesw}}
\end{figure}
Now we classify the singularities of the nullcone front defined in \eqref{nullconefront}. So our purpose in this paper is to prove the following theorem.
\begin{theorem}\label{mainthm}
    Let $(\gamma,v_1,v_2):I\to AdS^3\times \Delta_1$ be a pseudo-spherical spacelike framed curve with the curvature $({\alpha},{\ell},{m},{n})$ such that $m(s)\pm n(s)\neq 0$. Then we have the following.
    \begin{enumerate}
        \item[{\normalfont (i)}] The nullcone front $\mathcal{NF}^\pm_\gamma$ of $\gamma$ has singularity at $(s_0,\lambda_0)$ if and only if $\lambda_0=\dfrac{-\alpha(s_0)}{m(s_0)\pm n(s_0)}$.
         \item[{\normalfont (ii)}] The nullcone front $\mathcal{NF}^\pm_\gamma$ of $\gamma$ is locally diffeomorphic to the cuspidal edge at $(s_0,\lambda_0)$ if and only if $\lambda_0=\dfrac{-\alpha(s_0)}{m(s_0)\pm n(s_0)}$ and $\sigma(s_0)\neq 0$.
         \item[{\normalfont (iii)}] The nullcone front $\mathcal{NF}^\pm_\gamma$ of $\gamma$ is locally diffeomorphic to the swallowtail at $(s_0,\lambda_0)$ if and only if $\lambda_0=\dfrac{-\alpha(s_0)}{m(s_0)\pm n(s_0)}$, $\sigma(s_0)=0$, and $\sigma'(s_0)\neq 0$.
    \end{enumerate}
    Here 
    \[\sigma(s)=\alpha(s)\big( -(m'(s)\pm n'(s))+\ell(s)(n(s)\pm m(s))\big)+\alpha'(s)(m(s)\pm n(s)). \]
\end{theorem}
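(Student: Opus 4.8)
The plan is to apply the singularity criteria of Theorem \ref{thmsingcri} directly to the map $f(s,\lambda)=\mathcal{NF}^\pm_\gamma(s,\lambda)$, after pinning down its frontal structure. First I would record that $f$ genuinely lands in $AdS^3$: since $v_1\pm v_2$ is orthogonal to $\gamma$ and null, $\langle f,f\rangle=\langle\gamma,\gamma\rangle=-1$. Writing $e^\pm:=v_1\pm v_2$ and differentiating with the Frenet--Serret formulas \eqref{SpacelikeSF}, one gets $(e^\pm)'=\pm\ell\,e^\pm+(m\pm n)\mu$, hence
\[ f_\lambda=e^\pm,\qquad f_s=\pm\lambda\ell\,e^\pm+\bigl(\alpha+\lambda(m\pm n)\bigr)\mu. \]
Thus $df$ has image in $\mathrm{span}\{e^\pm,\mu\}$, and because $e^\pm$ is null the surface is lightlike with null normal $\nu=e^\pm$; this supplies the frontal structure, the wavefront property itself coming from the Legendrian lift of Section~5.

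For part (i) I would compute the signed area density via the triple product. Using the sign relations forced by \eqref{SpacelikeSF}, namely $\langle v_2,v_2\rangle=-\epsilon$ and $\langle\mu,\mu\rangle=1$, together with the orientation of the frame, the triple product evaluates to $f\times f_s\times f_\lambda=\kappa\,\bigl(\alpha+\lambda(m\pm n)\bigr)e^\pm$ for a nonzero factor $\kappa$ (equal to $\mp\epsilon$ in my computation). Hence $\Omega$ is a nonzero multiple of $\alpha+\lambda(m\pm n)$, so the singular set $\Omega^{-1}(0)$ is exactly $\{\lambda=-\alpha/(m\pm n)\}$, which is (i); this is immediate from $f_s=\pm\lambda\ell\,f_\lambda$ precisely when the $\mu$-coefficient vanishes. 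The same reading gives $\partial_\lambda\Omega$ a nonzero multiple of $m\pm n\neq0$, so $d\Omega\neq0$ everywhere and every singular point is non-degenerate.

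Next I would build the singular curve and the null vector field, then carry out the decisive simplification. Parametrize the singular set by $c(s)=(s,\lambda(s))$ with $\lambda(s)=-\alpha/(m\pm n)$, so $c'=(1,\lambda')$. On the singular set $f_s=\pm\lambda\ell\,e^\pm=\pm\lambda\ell\,f_\lambda$, so $df(\xi)=0$ for $\xi=(1,\mp\lambda\ell)$, giving $\det(c',\xi)=\mp\lambda\ell-\lambda'$. The hard part is to reduce this to the stated $\sigma$: substituting $\lambda=-\alpha/w$ with $w:=m\pm n$ and clearing $w^2$ turns the numerator into $\pm\alpha\ell w+\alpha'w-\alpha w'$, and the key algebraic observation $\pm(m\pm n)=n\pm m$ (valid for both sign choices) converts $\pm\ell w$ into $\ell(n\pm m)$, producing exactly
\[ \det(c',\xi)=\frac{\sigma(s)}{(m\pm n)^2}. \]

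Finally, since $w\neq0$, one has $\det(c',\xi)\neq0\iff\sigma\neq0$, which is the cuspidal-edge case (ii) by Theorem~\ref{thmsingcri}(i). When $\sigma(s_0)=0$ a direct differentiation gives $\tfrac{d}{ds}\det(c',\xi)(s_0)=\sigma'(s_0)/w(s_0)^2$, so the swallowtail condition reads $\sigma'(s_0)\neq0$, which is (iii). I expect the two genuine obstacles to be keeping all the $\pm$ and $\epsilon$ signs consistent through the triple-product evaluation of $\Omega$, and the reduction to $\sigma$ — the latter succeeding only because the identity $\pm(m\pm n)=n\pm m$ makes the connection coefficient $\ell$ enter in exactly the combination $\ell(n\pm m)$ appearing in $\sigma$.
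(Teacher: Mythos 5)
Your proposal is correct and follows essentially the same route as the paper's proof: the same partial-derivative computation via \eqref{SpacelikeSF}, the same signed area density $\Omega=-(\alpha+\lambda(m\pm n))$ with non-degeneracy from $\partial_\lambda\Omega\neq0$, the same singular curve $c(s)=(s,-\alpha/(m\pm n))$ and null vector field $\xi=(1,\pm\alpha\ell/(m\pm n))$, and the same reduction $\det(c',\xi)=\sigma/(m\pm n)^2$ feeding into Theorem \ref{thmsingcri}. Your added touches (checking the image lies in $AdS^3$, making explicit the identity $\pm(m\pm n)=n\pm m$, and tracking the factor $\mp\epsilon$ in the triple product, which the paper writes as $\mp1$) are harmless refinements, not a different method.
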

\begin{proof}
Let us calculate the partial derivatives of $\mathcal{NF}_\gamma^\pm(s,\lambda)$. Differentiating \eqref{nullconefront} with respect to $s$ and using \eqref{SpacelikeSF}, we find that
\begin{equation}
    \dfrac{\partial}{\partial s}\mathcal{NF}_\gamma^\pm(s,\lambda)=(\alpha(s)+\lambda(m(s)\pm n(s)))\mu(s)+\lambda\ell(s)(v_2(s)\pm v_1(s)). \label{ncone1}
\end{equation}
Differentiating \eqref{nullconefront} with respect to $\lambda$ gives
\begin{equation}
    \label{ncone2} 
    \dfrac{\partial}{\partial\theta}\mathcal{NF}_\gamma^\pm(s,\lambda)=v_1(s)\pm v_2(s).
\end{equation}
The nullcone surface of $\gamma$ defined by \eqref{nullconefront} has singularity at $(s_0,\theta_0)$ if and only if $\mathcal{NF}_\gamma^\pm\times \frac{\partial}{\partial s}\mathcal{NF}_\gamma^\pm\times \frac{\partial}{\partial \theta}\mathcal{NF}_\gamma^\pm(s_0,\lambda_0)=0$. Then from \eqref{nullconefront}, \eqref{ncone1}, and \eqref{ncone2}, this triple vector product is equal to 
\[ -\left(\alpha(s)+\lambda(m(s)\pm n(s))\right)(v_2(s)\pm v_1(s)), \]
which is zero at $(s_0,\lambda_0)$ if and only if 
\begin{equation}\label{ncone3}
   \alpha(s_0)+\lambda_0(m(s_0)\pm n(s_0))=0.
\end{equation}
So (i) immediately follows from the assumption $m(s)\pm n(s)\neq 0$. 

 We consider the signed density function 
\begin{equation*}
    \Omega(s,\lambda)=-(\alpha(s)+\lambda(m(s)\pm n(s))).
\end{equation*}
Set $\Omega^{-1}(0)=\mathcal{S}(\mathcal{NF}^\pm_\gamma)$. We see that $\mathcal{S}(\mathcal{NF}^\pm_\gamma)=\{(s,\lambda(s))\}$, where $\lambda(s)$ is a function satisfying $\Omega(s,\lambda(s))=0$. Then by assumption, we have 
\[ \dfrac{\partial}{\partial\lambda}\Omega(s,\lambda)=-(m(s)\pm n(s)) \neq 0. \]
Therefore any $p\in \mathcal{S}(\mathcal{NF}^\pm_\gamma)$ is non-degenerate. Let $p$ be a non-degenerate singular point. Then there exists a regular curve $c:I\to I\times\mathbb{R}\subset \mathbb{R}^2$ such that $c(0)=p$ and $\text{image}(c)=\mathcal{S}(\mathcal{NF}^\pm_\gamma)$ near $p$. Let $c(s)=(s,\lambda(s))$. Consider the null vector field $\xi:I\to\mathbb{R}^2$ along $c(s)$ given by $\xi(s)=(1,\pm\alpha(s)\ell(s)/(m(s)\pm n(s)))$. Then from \eqref{ncone3}
\begin{align*}
\det(c'(s_0), \xi(s_0))&=\begin{vmatrix}
    1 & -\dfrac{\alpha'(s_0)(m(s_0)\pm n(s_0))-\alpha(s_0)(m'(s_0)\pm n'(s_0))}{(m(s_0)\pm n(s_0))^2} \medskip\\
    1 & \pm \dfrac{\alpha(s_0)\ell(s_0)}{m(s_0)\pm n(s_0)}
\end{vmatrix} \\
&=\dfrac{\pm\alpha(s_0)\ell(s_0)(m(s_0)\pm n(s_0))+\alpha'(s_0)(m(s_0)\pm n(s_0))-\alpha(s_0)(m'(s_0)\pm n'(s_0))}{(m(s_0)\pm n(s_0))^2}\\
    &=\dfrac{\sigma(s_0)}{(m(s_0)\pm n(s_0))^2}.
\end{align*}
Thus by Theorem \ref{thmsingcri}(i), $\mathcal{NF}_\gamma^\pm$ is locally diffeomorphic to the cuspidal edge at $(s_0,\lambda_0)$ if and only if $\sigma(s_0)\neq 0$. This gives (ii).

It is easy to calculate that 
\[ \dfrac{d}{ds}=\det(c'(s), \xi(s))=\dfrac{\sigma'(s)(m(s)\pm n(s))^2-\sigma(s)(m'(s)\pm n'(s))}{(m(s)\pm n(s))^4}. \]
From Theorem \ref{thmsingcri}(ii),  $\mathcal{NF}_\gamma^\pm$ is locally diffeomorphic to the swallowtail at $(s_0,\lambda_0)$ if and only if $\sigma(s_0)=0$ and   $\sigma'(s_0)\neq 0$. Hence the proof of (iii) is completed.
\end{proof}
\begin{remark}
    Suppose that $s_0$ is a singular point of $\gamma(s)$. Then $(s_0,0)$ is a singular point of $\mathcal{NF}_{\gamma}^\pm$.
\end{remark}
The following corollary is a direct consequence of Theorem \ref{mainthm}. 
\begin{corollary}\label{cor1}
Let $(\gamma,v_1,v_2):I\to AdS^3\times \Delta_1$ be a pseudo-spherical framed curve with singularity at $s_0$, that is, $\alpha(s_0)=0$. Suppose that $m(s)\pm n(s)\neq0$ for all $s\in I$.
\begin{enumerate}
    \item[{\normalfont (i)}] The nullcone front $\mathcal{NF}_{\gamma}^\pm$ is locally diffeomorphic to the cuspidal edge at $(s_0,0)$ if and only if $\alpha'(s_0)\neq 0$.
    \item[{\normalfont (ii)}] The nullcone front $\mathcal{NF}_{\gamma}^\pm$ is locally diffeomorphic to the swallowtail at $(s_0,0)$ if and only if $\alpha'(s_0)=0$, $\alpha''(s_0)\neq0$.
\end{enumerate}
\end{corollary}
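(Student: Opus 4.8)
The plan is to read the Corollary directly off Theorem \ref{mainthm} by specializing at a singular point $s_0$ of $\gamma$, where by definition $\alpha(s_0)=0$. First I would note that substituting $\alpha(s_0)=0$ into the singularity location $\lambda_0=-\alpha(s_0)/(m(s_0)\pm n(s_0))$ from Theorem \ref{mainthm}(i) gives $\lambda_0=0$; this is well-defined because $m(s_0)\pm n(s_0)\neq 0$ by hypothesis. Hence the singular point of the front lying above $s_0$ is exactly $(s_0,0)$, in agreement with the preceding Remark, and everything reduces to evaluating $\sigma$ and $\sigma'$ at $s_0$ under the constraint $\alpha(s_0)=0$.

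For part (i), I would evaluate
\[
\sigma(s)=\alpha(s)\big(-(m'(s)\pm n'(s))+\ell(s)(n(s)\pm m(s))\big)+\alpha'(s)(m(s)\pm n(s))
\]
at $s_0$. Since $\alpha(s_0)=0$, the entire first summand vanishes and one is left with $\sigma(s_0)=\alpha'(s_0)(m(s_0)\pm n(s_0))$. Because $m(s_0)\pm n(s_0)\neq 0$, the condition $\sigma(s_0)\neq 0$ is equivalent to $\alpha'(s_0)\neq 0$, and Theorem \ref{mainthm}(ii) then yields the cuspidal-edge characterization (the iff structure being preserved since we only replace one nonvanishing condition by an equivalent one).

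For part (ii), I would work under the additional assumption $\alpha'(s_0)=0$, which by the previous paragraph is exactly $\sigma(s_0)=0$. Writing $\sigma=\alpha\cdot A+\alpha'\cdot B$ with $A=-(m'\pm n')+\ell(n\pm m)$ and $B=m\pm n$, differentiation gives $\sigma'=\alpha' A+\alpha A'+\alpha'' B+\alpha' B'$. At $s_0$, where $\alpha(s_0)=\alpha'(s_0)=0$, every term carries a factor of $\alpha$ or $\alpha'$ except $\alpha''B$, so $\sigma'(s_0)=\alpha''(s_0)(m(s_0)\pm n(s_0))$. Using $m(s_0)\pm n(s_0)\neq 0$ once more, the swallowtail condition $\sigma'(s_0)\neq 0$ from Theorem \ref{mainthm}(iii) becomes precisely $\alpha''(s_0)\neq 0$, completing (ii).

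There is no genuine obstacle here, as the statement is indeed a direct consequence of the main theorem. The only step requiring mild care is the bookkeeping when differentiating $\sigma$: one must confirm that no term free of both $\alpha$ and $\alpha'$ is produced, which is immediate from the product structure $\sigma=\alpha A+\alpha' B$ exhibited above. The hypothesis $m(s)\pm n(s)\neq 0$ is used at every stage to guarantee both that $\lambda_0$ is defined and that the equivalences $\sigma(s_0)\neq 0 \Leftrightarrow \alpha'(s_0)\neq 0$ and $\sigma'(s_0)\neq 0 \Leftrightarrow \alpha''(s_0)\neq 0$ hold.
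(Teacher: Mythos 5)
Your proposal is correct and follows exactly the route the paper intends: the paper states Corollary \ref{cor1} as a direct consequence of Theorem \ref{mainthm} without writing out the verification, and your computation (that $\alpha(s_0)=0$ forces $\lambda_0=0$, $\sigma(s_0)=\alpha'(s_0)(m(s_0)\pm n(s_0))$, and, when also $\alpha'(s_0)=0$, $\sigma'(s_0)=\alpha''(s_0)(m(s_0)\pm n(s_0))$) is precisely the omitted specialization. Both equivalences correctly use the hypothesis $m(s)\pm n(s)\neq 0$, so there is nothing to add.
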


\section{Distance-squared functions on pseudo-spherical spacelike framed curves}
Given a pseudo-spherical spacelike framed curve $(\gamma,v_1,v_2):I\to AdS^3\times \Delta_1$, we now show how to explain the null surface of $\gamma$ as a wavefront from the viewpoint of Legendrian singularity theory. Define the following families of functions $D:I\times AdS^3\to\mathbb{R}$ by $D(s,\mathbf{v})=\langle \gamma(s)-\mathbf{v},\gamma(s)-\mathbf{v}\rangle$ called \textit{the Anti-de Sitter distance-squared function}. For any $\mathbf{v}_0\in AdS^3$, set $d_{\mathbf{v}_0}(s)=D(s,\mathbf{v}_0)$. The following proposition follows from a straightforward calculation.
\begin{proposition}
   Let $(\gamma,v_1,v_2):I\to AdS^3\times \Delta_1$ be a pseudo-spherical spacelike framed curve with the curvature $(\alpha, \ell, m, n)$. Suppose that $m(s)\pm n(s)\neq 0$ for all $s\in I$. Let $\sigma(s)=\alpha(s)\big( -(m'(s)\pm n'(s))+\ell(s)(n(s)\pm m(s))\big)+\alpha'(s)(m(s)\pm n(s))$.
   \begin{enumerate}
       \item[{\normalfont (1)}] $d_{\mathbf{v}_0}(s_0)=0$ if and only if there exist $a,b,c\in\mathbb{R}$ such that $\mathbf{v}_0=\gamma(s_0)+av_1(s_0)+bv_2(s_0)+c\mu(s_0)$, $c^2=-\epsilon(a^2-b^2)$.
       \item[{\normalfont (2)}] $d_{\mathbf{v}_0}(s_0)=d'_{\mathbf{v}_0}(s_0)=0$ if and only if $\alpha(s_0)$ or there exists a $\lambda\in\mathbb{R}$ such that  $\mathbf{v}=\gamma(s_0)+\lambda(v_1(s_0)\pm v_2(s_0))$.
\item[{\normalfont (3)}] $d_{\mathbf{v}_0}(s_0)=d'_{\mathbf{v}_0}(s_0)=d''_{\mathbf{v}_0}(s_0)=0$ if and only if at least one of the following conditions is satisfied
\begin{enumerate}
    \item[{\normalfont (i)}] $\alpha(s_0)=\alpha'(s_0)=0$,
    \item[{\normalfont (ii)}] $\alpha(s_0)$ and there exists a $\lambda\in\mathbb{R}$ such that $\mathbf{v}_0=\gamma(s_0)+\lambda(v_1(s_0)\pm v_2(s_0))$,
    \item[{\normalfont (iii)}] $\mathbf{v}_0=\gamma(s_0)-\dfrac{\alpha(s_0)}{m(s_0)\pm n(s_0)}(v_1(s_0)\pm v_2(s_0))$.
\end{enumerate}
\item[{\normalfont (4)}] $d_{\mathbf{v}_0}(s_0)=d'_{\mathbf{v}_0}(s_0)=d''_{\mathbf{v}_0}(s_0)=d'''_{\mathbf{v}_0}(s_0)=0$ if and only if at least one of the following conditions is satisfied
\begin{enumerate}
    \item[{\normalfont (i)}] $\alpha(s_0)=\alpha'(s_0)=\alpha''(s_0)=0$,
    \item[{\normalfont (ii)}] $\alpha(s_0)=\alpha'(s_0)=0$ and there exists a $\lambda\in\mathbb{R}$ such that $\mathbf{v}_0=\gamma(s_0)+\lambda(v_1(s_0)\pm v_2(s_0))$,
    \item[{\normalfont (iii)}] $\alpha(s_0)=0$ and $\mathbf{v}_0=\gamma(s_0)$,
    \item[{\normalfont (iv)}] $\mathbf{v}_0=\gamma(s_0)-\dfrac{\alpha(s_0)}{m(s_0)\pm n(s_0)}(v_1(s_0)\pm v_2(s_0))$ and $\sigma(s_0)=0$.
\end{enumerate}
\item[{\normalfont (5)}] $d_{\mathbf{v}_0}(s_0)=d'_{\mathbf{v}_0}(s_0)=d''_{\mathbf{v}_0}(s_0)=d'''_{\mathbf{v}_0}(s_0)=d^{(4)}_{\mathbf{v}_0}(s_0)=0$ if and only if at least one of the following conditions is satisfied
\begin{enumerate}
    \item[{\normalfont (i)}] $\alpha(s_0)=\alpha'(s_0)=\alpha''(s_0)=\alpha'''(s_0)=0$,
    \item[{\normalfont (ii)}] $\alpha(s_0)=\alpha'(s_0)=\alpha''(s_0)=0$ and there exists a $\lambda\in\mathbb{R}$ such that $\mathbf{v}_0=\gamma(s_0)+\lambda(v_1(s_0)\pm v_2(s_0))$,
    \item[{\normalfont (iii)}] $\alpha(s_0)=\alpha'(s_0)=0$ and $\mathbf{v}_0=\gamma(s_0)$,
    \item[{\normalfont (iv)}] $\mathbf{v}_0=\gamma(s_0)-\dfrac{\alpha(s_0)}{m(s_0)\pm n(s_0)}(v_1(s_0)\pm v_2(s_0))$, $\sigma(s_0)=0$, and $\sigma'(s_0)=0$.
\end{enumerate}
   \end{enumerate}
\end{proposition}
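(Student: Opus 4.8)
\emph{Plan.} The idea is to read off the successive derivatives $d^{(k)}_{\mathbf{v}_0}(s_0)$ from the Frenet--Serret formulas \eqref{SpacelikeSF} and to convert each vanishing statement into the listed geometric condition. Put $P(s)=\gamma(s)-\mathbf{v}_0$ and introduce the scalar functions $\phi_0=\langle P,\gamma\rangle$, $\phi_1=\langle P,v_1\rangle$, $\phi_2=\langle P,v_2\rangle$, $\phi_3=\langle P,\mu\rangle$. From \eqref{SpacelikeSF} one gets $\langle\mu,\mu\rangle=1$ and $\langle v_2,v_2\rangle=-\epsilon$, so that $v_1\pm v_2$ is null. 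Since $P'=\gamma'=\alpha\mu$, differentiating the four brackets and substituting \eqref{SpacelikeSF} produces the closed system
\begin{align*}
\phi_0'&=\alpha\phi_3, & \phi_1'&=\ell\phi_2+m\phi_3,\\
\phi_2'&=\ell\phi_1+n\phi_3, & \phi_3'&=\alpha(1+\phi_0)-\epsilon(m\phi_1-n\phi_2),
\end{align*}
together with $d_{\mathbf{v}_0}=\langle P,P\rangle$ and the master identity $d'_{\mathbf{v}_0}=2\alpha\phi_3$. All higher derivatives are then obtained by iterating this system, so the proposition reduces to bookkeeping of polynomial expressions in $\alpha,\ell,m,n$ and $\phi_0,\dots,\phi_3$ evaluated at $s_0$.

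First I would dispose of (1). Writing $\mathbf{v}_0=x\gamma(s_0)+av_1(s_0)+bv_2(s_0)+c\mu(s_0)$ and imposing both $\langle\mathbf{v}_0,\mathbf{v}_0\rangle=-1$ (as $\mathbf{v}_0\in AdS^3$) and $d_{\mathbf{v}_0}(s_0)=0$, subtracting the two resulting quadratic relations forces $x=1$, and the remaining relation is exactly $c^2=-\epsilon(a^2-b^2)$. For (2) I use $d'_{\mathbf{v}_0}(s_0)=2\alpha(s_0)\phi_3(s_0)$: its vanishing splits into $\alpha(s_0)=0$ or $\phi_3(s_0)=0$, and in the second case $d_{\mathbf{v}_0}(s_0)=0$ with $\phi_3(s_0)=0$ forces $a=\pm b$, i.e. $\mathbf{v}_0=\gamma(s_0)+\lambda(v_1(s_0)\pm v_2(s_0))$. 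This already identifies the nullcone front \eqref{nullconefront} with the set $\{d_{\mathbf{v}_0}=d'_{\mathbf{v}_0}=0\}$.

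The core of (3)--(5) is a single clean observation: along the nullcone branch, where $\phi_3(s_0)=\cdots=\phi_3^{(k-2)}(s_0)=0$, the Leibniz rule applied to $d'_{\mathbf{v}_0}=2\alpha\phi_3$ collapses to $d^{(k)}_{\mathbf{v}_0}(s_0)=2\alpha(s_0)\phi_3^{(k-1)}(s_0)$, so I only need the jet of $\phi_3$. Evaluating the system on $\mathbf{v}_0=\gamma+\lambda(v_1\pm v_2)$ gives $\phi_3'(s_0)=\alpha+\lambda(m\pm n)$, which vanishes precisely for $\lambda=\lambda_0=-\alpha/(m\pm n)$ (the singularity value of Theorem \ref{mainthm}); and a second differentiation, using $\phi_0(s_0)=\phi_0'(s_0)=0$, yields $\phi_3''(s_0)=\sigma(s_0)/(m(s_0)\pm n(s_0))$ when $\lambda=\lambda_0$. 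Feeding these into $d^{(k)}_{\mathbf{v}_0}(s_0)=2\alpha(s_0)\phi_3^{(k-1)}(s_0)$ produces in turn the factors $\alpha$, then $\alpha+\lambda(m\pm n)$, then $\sigma$, and finally (after one more differentiation) $\sigma'$. At each order the equation $d^{(k)}_{\mathbf{v}_0}=0$ branches into the ``$\alpha$-degenerate'' alternative $\alpha(s_0)=0$ (which pushes the analysis onto the point $\mathbf{v}_0=\gamma(s_0)$, since then $\lambda_0=0$, and onto the conditions $\alpha=\alpha'=\cdots=0$) and the ``wavefront'' alternative governed by $\phi_3^{(k-1)}$. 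Matching the two branches at orders $k=2,3,4,5$ reproduces cases (i)--(iv) of (3), (4) and (5) respectively.

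I expect the main obstacle to be organizational rather than computational: keeping the case tree consistent and exhaustive. Each new derivative condition bifurcates, and one must check that the $\alpha(s_0)=0$ sub-branches are correctly merged with the chain $\alpha=\alpha'=\cdots=0$, and that no case is double-counted or omitted. The only genuinely delicate calculation is the second and third derivatives of $\phi_3$, since these are where the curvature combination $\sigma$ (and then $\sigma'$) must emerge exactly; I would carry these out symbolically, being careful with the $\pm$ bookkeeping and the signs $\epsilon$ and $\langle v_2,v_2\rangle=-\epsilon$ coming from \eqref{SpacelikeSF}.
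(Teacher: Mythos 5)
Your proposal is correct, and it supplies exactly what the paper omits: the paper gives no proof at all (the proposition is prefaced only by ``follows from a straightforward calculation''), and your scalar system $\phi_0,\dots,\phi_3$ with the master identity $d'_{\mathbf{v}_0}=2\alpha\phi_3$ is precisely that calculation. I checked the key steps: the closed ODE system for the $\phi_i$ follows from \eqref{SpacelikeSF}; part (1) via the frame expansion with $x=1$ is right; $\phi_3'(s_0)=\alpha+\lambda(m\pm n)$ and, at $\lambda=\lambda_0=-\alpha/(m\pm n)$, $\phi_3''(s_0)=\sigma(s_0)/(m(s_0)\pm n(s_0))$ both check out, as does the Leibniz collapse $d^{(k)}_{\mathbf{v}_0}(s_0)=2\alpha(s_0)\phi_3^{(k-1)}(s_0)$ on the branch where $\phi_3,\dots,\phi_3^{(k-2)}$ vanish at $s_0$. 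One caution for the final write-up: at $\lambda=\lambda_0$ one finds $(m\pm n)\,\phi_3'''(s_0)=\sigma'(s_0)+(\text{a term proportional to }\ell(s_0)\sigma(s_0))$, so $\sigma'$ emerges cleanly only after imposing $\sigma(s_0)=0$ --- this is exactly the branch on which you invoke it, but the hypothesis must be used explicitly in the computation; likewise, on the branch $\alpha(s_0)=\alpha'(s_0)=0$ with general nullcone $\lambda$ the collapse identity does not apply (there $\phi_3'\neq0$) and one must instead use the vanishing of the $\alpha$-derivatives in the full Leibniz expansion, which your ``$\alpha$-degenerate alternative'' does implicitly. Finally, your case tree correctly restores the nullcone-membership condition $d_{\mathbf{v}_0}(s_0)=0$ that the paper's cases such as (3)(i) leave implicit, and resolves the paper's typos (``$\alpha(s_0)$'' meaning $\alpha(s_0)=0$); these readings are the right ones.
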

%
\section{Examples}
\begin{example}
Consider the smooth curve $\gamma_1:I\to AdS^3$ defined by
\[ \gamma_1(s)=\dfrac{1}{\sqrt{2}}\left(\sqrt{1+s^4}, \sqrt{1+s^6}, s^2, s^3\right). \]
The derivative of this curve with respect to $s$ is
\[ \gamma_1'(s)=\dfrac{1}{\sqrt{2}}\left( \dfrac{2s^3}{\sqrt{1+s^4}}, \dfrac{3 s^5}{\sqrt{1+s^6}},2s,3s^2 \right). \]
Therefore, the curve $\gamma$ is singular at $s=0$. Define $v_1:I\to S^3_2$ and $v_2:I\to AdS^3$ by
\begin{align*}
    v_1(s)&=\dfrac{1}{\sqrt{2(8+18s^2+s^6)}}(s^3\sqrt{1+s^4},s^3\sqrt{1+s^6},s^5+6s, s^6-4),\\
    v_2(s)&=\dfrac{1}{\sqrt{8+18s^2+s^6}\sqrt{4+9s^2+13s^6}}\big(-\sqrt{1+s^4}(4+9s^2-2s^6), \sqrt{1+s^6}(4+9s^2-2s^6),\\
    &\qquad\qquad\qquad\qquad\qquad\qquad\qquad\qquad 2s^2(-2+3s^2+s^6),3s^3(-2+3s^2+s^6)\big).
\end{align*}
It is easy to see that $\langle v_1, \gamma_1\rangle=0$, $\langle v_2, \gamma_1\rangle=0$, $\langle v_1, \gamma_1'\rangle=0$, and $\langle v_2, \gamma_1'\rangle=0$. Thus $(\gamma_1,v_1,v_2):I\to AdS^3\times \Delta_1$ is a pseudo-spherical spacelike framed curve in $AdS^3$. From the triple vector product $\gamma_1\times v_1\times v_2$, we find that
\[ \mu(s)=\dfrac{\sqrt{1+s^4}\sqrt{1+s^6}}{\sqrt{4+9s^2+13s^6}}\left(\dfrac{2s^2}{\sqrt{1+s^4}},\dfrac{3s^4}{\sqrt{1+s^6}},2,3s\right). \]
The curvature of $\gamma_1$ is given by $(\alpha,\ell,m,n)$, where
\begin{align*}
    \alpha(s)&=\dfrac{s\sqrt{4+9s^2+13s^6}}{\sqrt{2}\sqrt{1+s^4}\sqrt{1+s^6}},\\
    \ell(s)&=\dfrac{6\sqrt{2}s^2(2-3s^2-s^6)}{(8+18s^2+s^6)\sqrt{4+9s^2+13s^6}},\\
    m(s)&=\dfrac{12+16s^4+21s^6+25s^{10}}{\sqrt{2}\sqrt{1+s^4}\sqrt{1+s^6}\sqrt{8+18s^2+s^6}\sqrt{4+9s^2+13s^6}},\\
    n(s)&=\dfrac{s(-16+30s^2+81s^4+58s^6+102s^8+65s^{12})}{\sqrt{1+s^4}\sqrt{1+s^6}\sqrt{8+18s^2+s^6}(4+9s^2+13s^6)}.
\end{align*}
Then the nullcone front $\mathcal{NF}_{\gamma_1}^+$ is
\begin{align*}
    \mathcal{NF}_{\gamma_1}^+(s,\lambda)=&\bigg( \dfrac{\sqrt{1+s^4}}{2}\left( \sqrt{2}+\dfrac{\left(-8+s^2(-18+4s^4+s\sqrt{8+18s^2+26s^6}) \right)\lambda}{\sqrt{8+18s^2+s^6}\sqrt{4+9s^2+13s^6}} \right), \\
    & \dfrac{\sqrt{1+s^6}}{2}\left( \sqrt{2}+\dfrac{\left(8+s^2(18+6s^4+s\sqrt{8+18s^2+26s^6}) \right)\lambda}{\sqrt{8+18s^2+s^6}\sqrt{4+9s^2+13s^6}} \right), \\
    & \dfrac{s}{2}\left( \sqrt{2}s+\dfrac{\left(-8s+12s^3+4s^7+(6+s^4)\sqrt{8+18s^2+26s^6} \right)\lambda}{\sqrt{8+18s^2+s^6}\sqrt{4+9s^2+13s^6}} \right), \\
    & \dfrac{s^3}{\sqrt{2}}+\dfrac{\left(-12s^3+18s^5+6s^9+(s^6-4)\sqrt{8+18s^2+26s^6} \right)\lambda}{2\sqrt{8+18s^2+s^6}\sqrt{4+9s^2+13s^6}}  \bigg).
\end{align*}
The projections of $\gamma_1$ and $\mathcal{NF}_{\gamma_1}^+$ onto the $x_2x_3x_4$-space are visualized in Figure \ref{fig1}.
Notice that $\alpha(0)=0$ and $\alpha'(0)\neq 0$. Then by Corollary \ref{cor1}(i) $\mathcal{NF}_{\gamma_1}^+$ is locally diffeomorphic to the cuspidal edge at $(0,0)$.
\begin{figure}[H]
		\centering
		\includegraphics[width=0.6\textwidth]{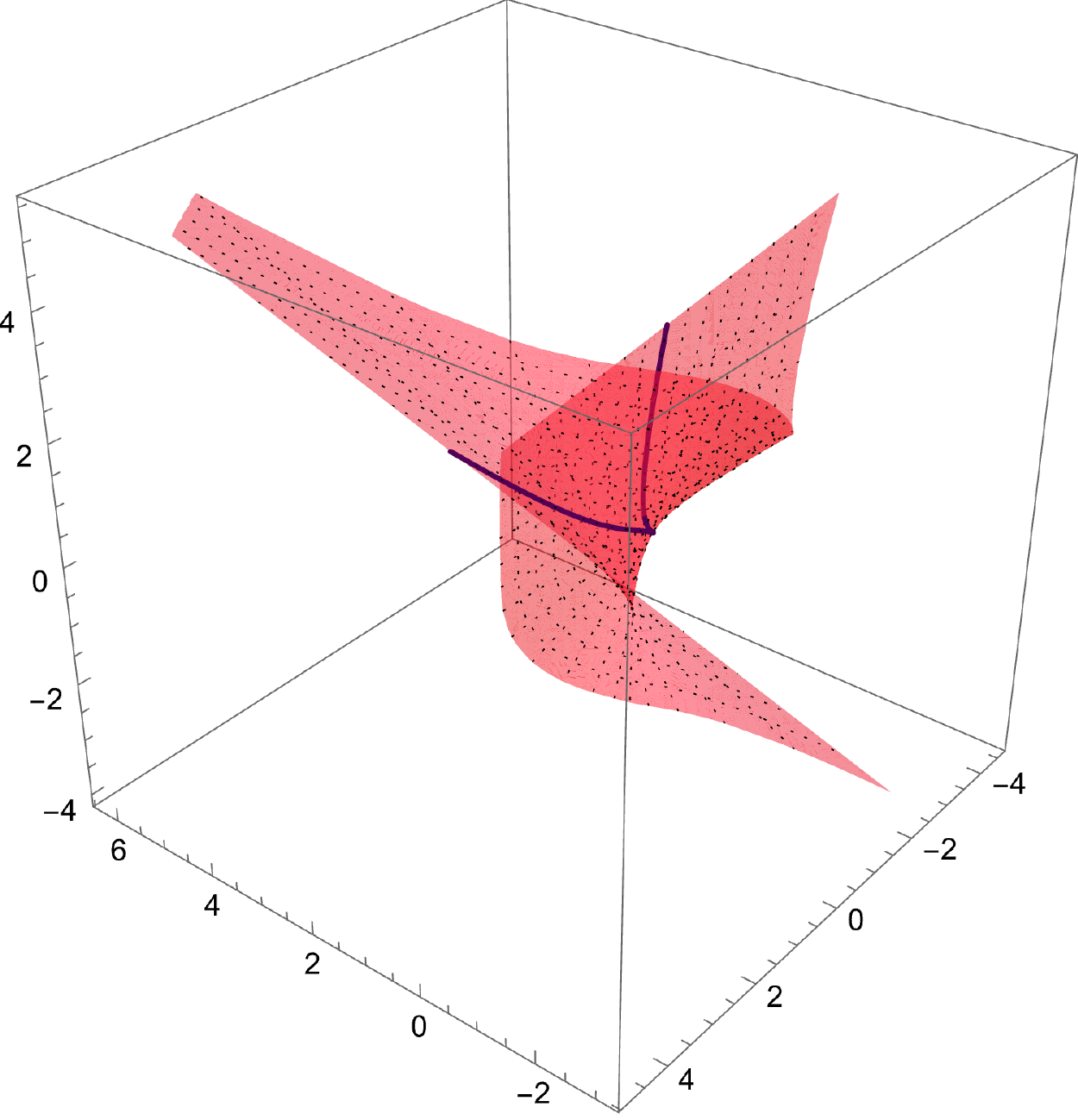}
		\caption{The projection of the nullcone front $\mathcal{NF}_{\gamma_1}^+$ onto the $x_2x_3x_4$-space} \label{fig1}
	\end{figure}
\end{example}
\begin{example}
Consider the curve $\gamma_2(s)=(0,\sqrt{1+\sin^6 s+\cos^6s},\cos^3(s), \sin^3(s))$. Letting 	
\[v_1(s)=\dfrac{1}{\sqrt{1+\sin^2s\cos^2s}}\bigg(0,\sin s\cos s\sqrt{1+\cos^6s+\sin^6s},\sin s(1+\cos^4s),\cos s(1+\sin^4s)\bigg), \]
and $v_2(s)=(1,0,0,0)$, we see that $(\gamma_2,v_1,v_2):I\to AdS^3\times\Delta_1$ is a pseudo-spherical spacelike framed curve. We also get
	\[\mu(s)=\dfrac{\sqrt{1+\cos^6s+\sin^6s}}{\sqrt{1+\sin^2s\cos^2s}}\bigg(0,\dfrac{\cos^4s-\sin^4 s}{\sqrt{1+\cos^6s+\sin^6s}},\cos s,-\sin s\bigg). \]
 The curvature of this framed curve is given by $(\alpha,\ell,m,n)$, where
 \[ \alpha(s)=-\dfrac{3\sin2s\sqrt{4+\sin^2(2s)}}{\sqrt{26+6\cos4s}},\quad \ell(s)=0,\quad m(s)=\dfrac{302+216\cos4s-6\cos8s}{8(9-\cos4s)\sqrt{26+6\cos4s}}, \quad n(s)=0. \]
 Then the nullcone front $\mathcal{NF}_{\gamma_2}^+$ is
\begin{align*}
    \mathcal{NF}_{\gamma_2}^+(s,\lambda)=&\bigg(\lambda,  \sqrt{1+\cos^6s+\sin^6s}\left(1+\dfrac{\lambda\cos s\,\sin s}{\sqrt{1+\cos^2s\sin^2s}} \right), \\
    & \: \cos^3s+\dfrac{\sin s\,(1+\cos^4 s)\lambda}{\sqrt{1+\cos^2s\sin^2s}},\, \sin^3s+\dfrac{\cos s\,(1+\sin^4 s)\lambda}{\sqrt{1+\cos^2s\sin^2s}}  \bigg).
\end{align*}
The projections of $\gamma_2$ and $\mathcal{NF}_{\gamma_2}^+$ onto the $x_2x_3x_4$-space are visualized in Figure \ref{fig2}. By Corollary \ref{cor1}(i), this nullcone front is locally diffeomorphic to the cuspidal edge at $(\rho,0)$ since $\alpha(\rho)=0$ and $\alpha'(\rho)\neq0$, where $\rho=0,\pi/2,\pi,2\pi$.
    \begin{figure}[H]
		\centering
		\includegraphics[width=0.6\textwidth]{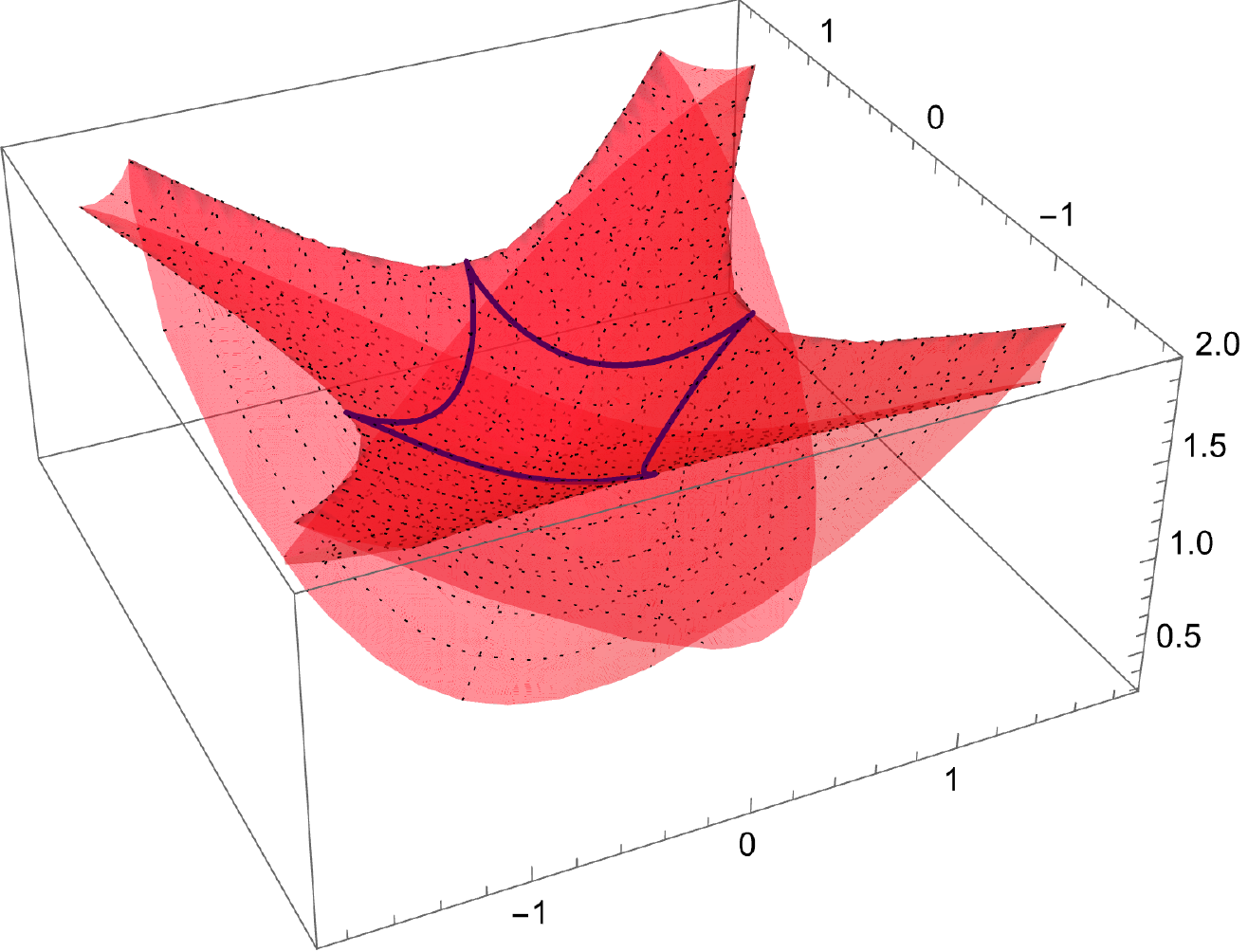}
		\caption{The projection of the nullcone front $\mathcal{NF}_{\gamma_2}^+$ onto the $x_2x_3x_4$-space} \label{fig2}
	\end{figure}
\end{example}
\begin{example}
	Consider the curve $\gamma_3(s)=(0,\sqrt{1+s^6+s^{8}}, s^3,s^4)$. Differentiating this equation with respect to $s$ yields
	\[ \gamma_3'(s)=(0,\dfrac{3s^5+4s^{7}}{\sqrt{1+s^6+s^{8}}},\,3s^2,\,4s^3).\]
	Taking 
	\[v_1(s)=\dfrac{1}{\sqrt{s^{8}+8s^{2}+9}}\big(0,s^4\sqrt{1+s^6+s^{8}},s^7+4s,s^{8}-3\big), \]
 and 
 \[ v_2(s)=(1,0,0,0), \]
	we find that $(\gamma_3,v_1,v_2):I\to AdS^3\times\Delta_1$ is a pseudo-spherical spacelike framed curve with the curvature $(\alpha,\ell,m,n)$, where
	\[\alpha(s)=\dfrac{s^2\sqrt{s^{8}+8s^{2}+9}}{\sqrt{1+s^6+s^{8}}},\quad \ell(s)=0, \quad m(s)=\dfrac{s^{14}+28s^{8}+21s^6+12}{(s^{8}+8s^{2}+9)\sqrt{1+s^6+s^{8}}}, \quad n(s)=0.\]
Then the nullcone front $\mathcal{NF}_{\gamma_3}^+$ is
\begin{align*}
    \mathcal{NF}_{\gamma_3}^+(s,\lambda)=&\bigg(\lambda,  \sqrt{1+s^6+s^{8}}\left(1+\dfrac{s^4\lambda}{\sqrt{9+8s^2+s^{8}}} \right), \\
    & \: s^3+\dfrac{s(4+s^6)\lambda}{\sqrt{9+8s^2+s^{8}}},\, s^4+\dfrac{(-3+s^{8})\lambda}{\sqrt{9+8s^2+s^{8}}}  \bigg).
\end{align*}
The projections of $\gamma_3$ and $\mathcal{NF}_{\gamma_3}^+$ onto the $x_2x_3x_4$-space are visualized in Figure \ref{fig3}. Notice that $\alpha(0)=0$, $\alpha'(0)=0$, and $\alpha''(0)\neq 0$. Then by Corollary \ref{cor1}(ii) $\mathcal{NF}_{\gamma_3}^+$ is locally diffeomorphic to the swallowtail at $(0,0)$.
	\begin{figure}[H]
		\centering
		\includegraphics[width=0.65\textwidth]{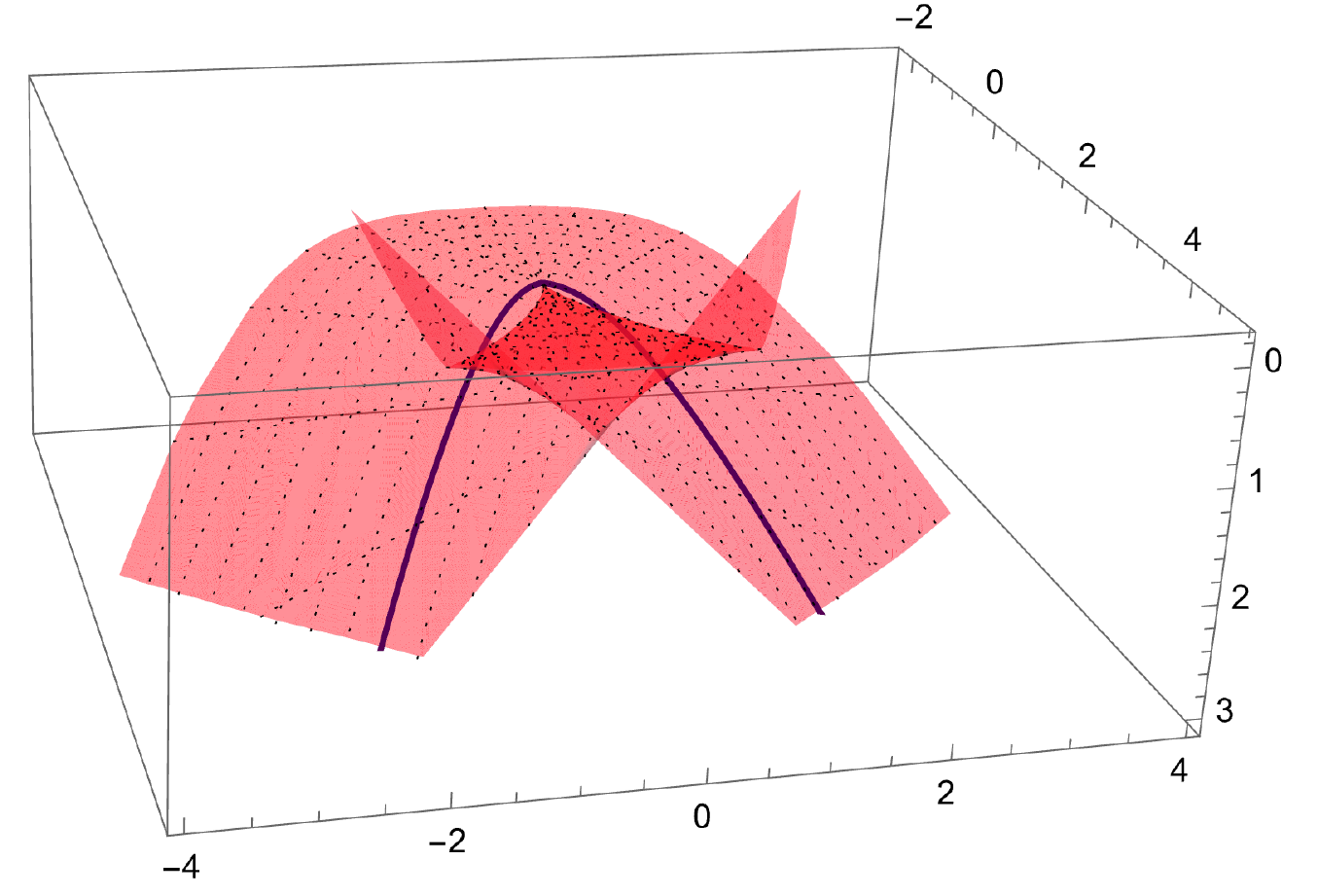}
		\caption{The projection of the nullcone front $\mathcal{NF}_{\gamma_3}^+$ onto the $x_2x_3x_4$-space} \label{fig3}
	\end{figure}
\end{example}


\end{document}